\documentclass[a4paper,12pt]{article}
\usepackage{amsmath}
\usepackage{amssymb}
\usepackage{amsthm}

\theoremstyle{plain}
\newtheorem{theorem}{Theorem}[section]

\newtheorem{corollary}{Corollary}[section]
\theoremstyle{definition}

\begin{document}

 \title{The arithmetic derivative and Leibniz-additive functions}

\author{
{ \sc Pentti Haukkanen \& Jorma K. Merikoski} \\ 
Faculty of Natural Sciences, FI-33014 University of Tampere\\ Finland\\
 pentti.haukkanen@uta.fi \& jorma.merikoski@uta.fi\\
{\sc Timo Tossavainen} \\
Department of Arts, Communication and Education, \\
Lulea University of Technology, SE-97187 Lulea, Sweden
\\ timo.tossavainen@ltu.se\\
}

\maketitle

 \begin{abstract}
An arithmetic function $f$ is Leibniz-additive if there is a completely multiplicative function $h_f$, i.e., $h_f(1)=1$ and $h_f(mn)=h_f(m)h_f(n)$ for all positive integers $m$ and $n$, satisfying
$$
f(mn)=f(m)h_f(n)+f(n)h_f(m)
$$
for all positive integers $m$ and $n$. A motivation for the present study is the fact that Leibniz-additive functions are generalizations of the arithmetic derivative $D$; namely, $D$ is Leibniz-additive with $h_D(n)=n$. 
In this paper, we study the basic properties of Leibniz-additive functions and, among other things, show that a Leibniz-additive function $f$ is totally determined by the values of $f$ and $h_f$ at primes. We also consider properties of Leibniz-additive functions with respect to the usual product, composition and Dirichlet convolution of arithmetic functions. 
\end{abstract}

\section{Introduction}
 Let $n$ be a positive integer. Its {\it arithmetic derivative} $D(n)=n'$ is defined
 as follows: 

\medskip
\noindent
(i)\hskip1cm $p'=1$ for all primes $p$,

\medskip
\noindent
(ii)\hskip0.8cm $(mn)'=mn'+m'n$ for all positive integers $m$ and $n$.

\medskip
\noindent

Given
$$n=\prod_{q\in\mathbb{P}}q^{\nu_q(n)},$$ 
where $\mathbb{P}$ is the set of primes,
the formula for computing the arithmetic derivative of $n$ is (see, e.g., \cite{Ba, UA})
$$n'=n\sum_{p\in\mathbb{P}}\frac{\nu_p(n)}{p}.$$ 
A brief summary on the history of arithmetic derivative and its generalizations to other number sets can be found, e.g., in \cite{Ba, UA, HMT}. 

Similarly, one can define {\it the arithmetic partial derivative} (see, e.g., \cite{Ko, HMT}) via
$$
D_p(n)=n_p'=\frac{\nu_p(n)}{p} n,
$$ 
and {\it the arithmetic logarithmic derivative} \cite{UA} as
$$
 {\rm ld}(n)=\frac{D(n)}{n}.
$$

An arithmetic function $f$ is said to be {\it additive} if $f(mn)=f(m)+f(n)$, whenever gcd$(m, n)=1$, and {\it multiplicative} if $f(1)=1$ and $f(mn)=f(m)f(n)$, whenever gcd$(m, n)=1$. Additive and multiplicative functions are totally determined by their values at prime powers. Further,
an arithmetic function $f$ is said to be {\it completely additive} if $f(mn)=f(m)+f(n)$ for all positive integers $m$ and $n$, and 
{\it completely multiplicative} if $f(1)=1$ and $f(mn)=f(m)f(n)$ for all positive integers $m$ and $n$. Completely additive and completely multiplicative functions are totally determined by their values at primes. 

\begin{theorem}\label{cmca}
Let 
\begin{equation}\label{fta}
n=q_1q_2\cdots q_r=p_1^{n_1}p_2^{n_2}\cdots p_s^{n_s}, 
\end{equation}
where  $q_1, q_2,\ldots, q_r$ are primes and $p_1, p_2,\ldots, p_s$ are distinct primes. 
If $f$ is completely additive, then $f(1)=0$ and 
$$
f(n)=\sum_{i=1}^r f(q_i)=\sum_{i=1}^s n_i f(p_i), 
$$
and if $f$ is completely multiplicative, then $f(1)=1$ and  
$$
f(n)=\prod_{i=1}^r f(q_i)=\prod_{i=1}^s f(p_i)^{n_i}.  
$$
\end{theorem}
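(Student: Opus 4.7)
The plan is to treat the two cases separately, since the completely multiplicative case is essentially the same argument as the completely additive case with sums replaced by products. I will describe the additive case in detail and then indicate how the multiplicative case is parallel.

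First, for completely additive $f$, I would derive $f(1)=0$ directly from the functional equation by setting $m=n=1$, giving $f(1)=f(1)+f(1)$. Next, I would prove $f(n)=\sum_{i=1}^{r}f(q_i)$ by induction on $r$, the number of prime factors of $n$ counted with multiplicity. The base case $r=1$ is the trivial identity $f(q_1)=f(q_1)$, while for the inductive step I would write $n=q_1\cdots q_{r-1}\cdot q_r$ and apply complete additivity once to split off $q_r$, then invoke the induction hypothesis on $q_1\cdots q_{r-1}$. The second equality in the additive conclusion then follows by collecting equal primes: since the $q_i$ are the $p_j$ listed with multiplicities $n_j$, the sum $\sum_{i=1}^{r}f(q_i)$ reorganizes as $\sum_{j=1}^{s}n_j f(p_j)$, which is purely bookkeeping from the factorization \eqref{fta}.

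For completely multiplicative $f$, the value $f(1)=1$ is part of the definition, so nothing needs to be proved there. The identity $f(n)=\prod_{i=1}^{r}f(q_i)$ is again an induction on $r$ with exactly the same shape as above, substituting the product relation $f(mn)=f(m)f(n)$ for the sum relation. The passage to $\prod_{i=1}^{s}f(p_i)^{n_i}$ is once more a regrouping of equal factors.

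I do not foresee any real obstacle: the proof is a straightforward induction plus a reorganization of a product/sum according to the factorization \eqref{fta}. The only mildly delicate point is making it clear that the two expressions on the right-hand sides are equal as an immediate consequence of \eqref{fta} itself, rather than requiring any further property of $f$; I would state this explicitly so the reader sees that the second equality uses no hypothesis beyond the decomposition of $n$.
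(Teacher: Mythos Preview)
Your argument is correct and is exactly the standard one. Note, however, that the paper does not actually supply a proof of this theorem: it is stated as a well-known fact about completely additive and completely multiplicative functions, with references to the literature, so there is no ``paper's own proof'' to compare against.
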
 

The functions defined above are widely studied in the literature, see, e.g., \cite{Ap,KM,LT,Mc,SC,Sc, Sh}. 

\bigskip

We say that an arithmetic function $f$ is {\em Leibniz-additive} (or, {\em L-additive}, in short) if there is a completely multiplicative function $h_f$ such that 
\begin{equation}\label{gca}
f(mn)=f(m)h_f(n)+f(n)h_f(m)
\end{equation}
for all positive integers $m$ and $n$. 
Then $f(1)=0$ since $h_f(1)=1$. 
The property \eqref{gca} may be considered a generalized Leibniz rule.
For example, the arithmetic derivative $D$ is L-additive with $h_D(n)=n$, 
since it satisfies the usual Leibniz rule 
$$
D(mn)=D(m)n+D(n)m
$$
for all positive integers $m$ and $n$, and the function $h_D(n)=n$ is completely multiplicative. Similarly, the arithmetic partial derivative respect to the prime $p$ is L-additive with $h_{D_p}(n)=n$.
Further, all completely additive functions $f$ are L-additive with $h_f(n)=1$. For example, the logarithmic derivative of $n$ is completely additive since
$$
 {\rm ld}(mn) = {\rm ld}(m)+{\rm ld}(n).
$$

The term ``L-additive function" seems to be new in the literature, yet Chawla \cite{Ch} has defined  the concept of completely distributive arithmetic function meaning the same as we do with an L-additive function. However, this is a somewhat misleading term since a distributive arithmetic function usually refers to a property that 
\begin{equation}\label{distr}
f(u\ast v)=(fu)\ast(fv),
\end{equation}
 i.e., the function $f$ distributes over the Dirichlet convolution. This is satisfied by completely multiplicative arithmetic functions, not by completely distributive functions as Chawla defined them. 

Because L-additivity is analogous with generalized additivity and generalized multiplicativity (defined in \cite{Ha}), we could, alternatively, speak about generalized complete additivity (and also define the concept of generalized complete multiplicativity). 

In this paper, we consider L-additive functions especially from the viewpoint that they are generalizations of the arithmetic derivative. In the next section, we present their basic properties. In the last section, we study L-additivity and the arithmetic derivative in terms of the Dirichlet convolution.


\section{Basic properties}

\begin{theorem}\label{quotient}
Let $f$ be an arithmetic function. If $f$ is L-additive and
$h_f$ is nonzero-valued, then $f/h_f$ is completely additive.
Conversely, if there is a completely multiplicative nonzero-valued function~$h$
such that $f/h$ is completely additive, then $f$ is L-additive and $h_f=h$.
\end{theorem}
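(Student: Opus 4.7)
The plan is to prove both directions by direct computation, exploiting the fact that complete multiplicativity of $h_f$ (or $h$) gives $h_f(mn) = h_f(m)h_f(n)$ with no coprimality restriction, which makes the division in the L-additivity equation legal for every pair $m, n$.

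For the forward implication, I would set $g = f/h_f$ (well-defined on all positive integers since $h_f$ is nonzero-valued) and compute $g(mn)$ directly. Starting from the L-additivity relation \eqref{gca} and dividing both sides by $h_f(mn) = h_f(m) h_f(n)$, the right-hand side splits as
\[
\frac{f(m) h_f(n)}{h_f(m) h_f(n)} + \frac{f(n) h_f(m)}{h_f(m) h_f(n)} = \frac{f(m)}{h_f(m)} + \frac{f(n)}{h_f(n)} = g(m) + g(n),
\]
so $g(mn) = g(m) + g(n)$ for all positive integers $m, n$, i.e., $g$ is completely additive.

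For the converse, assume $h$ is completely multiplicative and nonzero-valued, and that $g := f/h$ is completely additive. I would write $f = g h$ and then, for arbitrary positive integers $m, n$, expand
\[
f(mn) = g(mn) h(mn) = \bigl(g(m) + g(n)\bigr) h(m) h(n) = g(m) h(m) h(n) + g(n) h(n) h(m),
\]
which is exactly $f(m) h(n) + f(n) h(m)$. Thus \eqref{gca} holds with the completely multiplicative function $h$ playing the role of $h_f$, so $f$ is L-additive and we may take $h_f = h$.

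There is essentially no obstacle here; the result is a clean consequence of unwinding the definitions, and the only point one has to be careful about is using complete (rather than merely restricted) multiplicativity of $h_f$ so that the identity $h_f(mn) = h_f(m) h_f(n)$ may be invoked without any coprimality hypothesis on $m$ and $n$. A minor observation worth recording in the statement or a remark is that the converse also implicitly shows that when $h_f$ is nonzero-valued, the auxiliary function $h_f$ associated with an L-additive $f$ is uniquely determined, namely as $h_f = f / g$ where $g$ is the completely additive function $f/h_f$.
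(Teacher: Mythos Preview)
Your proof is correct and follows essentially the same direct computation as the paper's own argument: divide the L-additivity identity by $h_f(mn)=h_f(m)h_f(n)$ for the forward direction, and reverse the algebra for the converse. One caveat about your closing remark: the converse does \emph{not} yield uniqueness of $h_f$ in general (e.g., if $f\equiv 0$ then any nonzero-valued completely multiplicative $h$ works); the paper handles uniqueness separately under the extra hypothesis $f(p)\ne 0$ for all primes~$p$.
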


\begin{proof} 
If $f$ satisfies~\eqref{gca} and $h_f$ is never zero, then
$$
\frac{f(mn)}{h_f(mn)}=\frac{f(m)h_f(n)+f(n)h_f(m)}{h_f(m)h_f(n)}=
\frac{f(m)}{h_f(m)}+\frac{f(n)}{h_f(n)},
$$
verifying the first part. The second part follows by substituting $h_f=h$
in the above and exchanging the sides of the second equation. 
\end{proof} 

\begin{theorem}\label{prod}
If $f$ is  L-additive such that $h_f(n)\ne 0$ for all  positive integers $n$, then 
$$
f=g_f h_f,
$$
where $g_f$ is completely additive. Conversely, if $f$ is of the form
\begin{equation}\label{fact}
f=gh, 
\end{equation}
where $g$ is completely additive and $h$ is completely multiplicative, then $f$ is L-additive with $h_f=h$.
\end{theorem}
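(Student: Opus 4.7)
The plan is to derive the first statement as an immediate consequence of Theorem~\ref{quotient} and to verify the converse by a direct algebraic computation. The two directions are essentially a reformulation of Theorem~\ref{quotient} in multiplicative rather than divided form, so the work is bookkeeping; I expect no significant obstacle.

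For the forward direction, I would assume $f$ is L-additive with $h_f(n)\neq0$ for every $n$. Theorem~\ref{quotient} then gives that $f/h_f$ is completely additive. Setting $g_f := f/h_f$ yields the factorization $f = g_f h_f$ with $g_f$ completely additive, as required. This step uses the nonvanishing of $h_f$ only to make the division legitimate.

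For the converse, I would suppose $f = gh$ with $g$ completely additive and $h$ completely multiplicative, and verify \eqref{gca} directly. Expanding,
$$
f(mn) = g(mn)\,h(mn) = \bigl(g(m)+g(n)\bigr)h(m)h(n) = g(m)h(m)\cdot h(n) + g(n)h(n)\cdot h(m),
$$
and recognizing $g(m)h(m)=f(m)$ and $g(n)h(n)=f(n)$ gives $f(mn)=f(m)h(n)+f(n)h(m)$. Thus $f$ is L-additive with $h_f = h$.

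The only point worth flagging is the asymmetry in hypotheses: the forward implication needs $h_f$ to be nonzero-valued so that $f/h_f$ is defined, whereas the converse is a purely algebraic identity and does not require any nonvanishing assumption on $h$. Apart from this remark, no subtle argument is needed.
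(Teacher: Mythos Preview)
Your proposal is correct and follows essentially the same approach as the paper: the forward direction invokes Theorem~\ref{quotient} to set $g_f=f/h_f$, and the converse is the same direct expansion $f(mn)=g(mn)h(mn)=(g(m)+g(n))h(m)h(n)=f(m)h(n)+f(n)h(m)$. Your added remark on the asymmetry of the nonvanishing hypothesis is accurate and not needed for the argument itself.
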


\begin{proof}
If $f$ is  L-additive such that $h_f(n)\ne 0$ for all  positive integers $n$, 
then denoting $g_f=f/h_f$ in Theorem~\ref{quotient} we obtain $f=g_f h_f$, 
 where $g_f$ is completely additive. 

Conversely, assume that $f$ is of the form \eqref{fact}. Then 
\begin{eqnarray*}
f(mn)
&=&h(m)h(n)[g(m)+g(n)]=(hg)(m)h(n)+(hg)(n)h(m)
\\
&=&f(m)h(n)+f(n)h(m).
\end{eqnarray*}
\end{proof} 

\begin{corollary}
If an arithmetic function $f$ satisfies the Leibniz rule
$$
 f(mn) = f(m)n + f(n)m
$$
for all positive integers $m, n$, then $f(n) = g_f(n)n$, where $g_f$ is completely additive. Also the converse holds.
\end{corollary}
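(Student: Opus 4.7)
The plan is to recognize that this corollary is a direct specialization of Theorem~\ref{prod} to the case $h(n)=n$, which is the completely multiplicative function nowhere vanishing on positive integers. So almost no new work is needed; the job is to check that both hypotheses translate correctly under this specialization.

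For the forward direction, I would begin by observing that the Leibniz rule $f(mn)=f(m)n+f(n)m$ is precisely the defining identity \eqref{gca} of L-additivity in the particular case $h_f(n)=n$. Since the identity function is completely multiplicative and satisfies $h_f(n)\ne 0$ for every positive integer $n$, Theorem~\ref{prod} applies and yields a factorization $f=g_f h_f$ with $g_f$ completely additive. Unfolding this factorization pointwise gives $f(n)=g_f(n)\,n$, as claimed.

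For the converse, I would start from $f(n)=g_f(n)n$ with $g_f$ completely additive, set $g=g_f$ and $h(n)=n$, and invoke the converse part of Theorem~\ref{prod}: since $h$ is completely multiplicative and $g$ completely additive, $f=gh$ is L-additive with $h_f=h$, i.e.\ $f(mn)=f(m)n+f(n)m$ for all positive integers $m,n$.

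There is no genuine obstacle; the only thing that requires a moment's care is making sure the uniqueness/identification of $h_f$ with the identity function is legitimate, but this is immediate because $h_f(n)=n$ is prescribed by the shape of the Leibniz rule in the hypothesis and matches the $h$ produced in the converse direction of Theorem~\ref{prod}.
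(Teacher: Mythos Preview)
Your proposal is correct and takes essentially the same approach as the paper: the paper's proof is the single sentence ``The function $f$ is L-additive with $h_f(n)=n$,'' which is precisely your observation that the corollary is Theorem~\ref{prod} specialized to the nowhere-vanishing completely multiplicative function $h_f(n)=n$. You have simply made explicit the two directions that the paper leaves implicit.
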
 

\begin{proof} 
The function $f$ is L-additive with $h_f(n) = n$. 
\end{proof} 

Theorem \ref{prod} shows that each L-additive function $f$ such that $h_f$ is always nonzero can be represented as a pair of a completely additive function $g_f$ and a completely multiplicative function $h_f$. In this case, we write $f=(g_f, h_f)$. 
However, if $h_f(p)=0$ for some prime $p$ and $f(p)$ is nonzero, then there is not any function $g_f$ such that $f=g_f h_f$ and, consequently, a representation of this kind is not possible. 

On the other hand, the representation of a L-additive function as such a pair is not always unique either. For instance, if $f$ and $g_f$ are identically zero, then $h_f$ may be any completely multiplicative function. 
Next theorem will however show that this representation is unique whenever $f$ and $h_f$ are nonzero at all primes. Observe that the latter condition is equivalent to assuming that $h_f$ is nonzero for all positive integers, cf. Theorem 2.2.

\begin{theorem}
If $f$ is L-additive such that $f(p)\ne 0$ and $h_f(p)\ne 0$ for all primes $p$, then the representation $f=(g_f, h_f)$ is unique. 
\end{theorem}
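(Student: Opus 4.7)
The plan is to reduce the problem to checking equality of any two representations at primes. Suppose $(g_1,h_1)$ and $(g_2,h_2)$ are two pairs with each $g_i$ completely additive, each $h_i$ completely multiplicative, and $f=g_1h_1=g_2h_2$. By Theorem~\ref{cmca}, every completely additive and every completely multiplicative function is totally determined by its values at primes, so it suffices to prove that $g_1(p)=g_2(p)$ and $h_1(p)=h_2(p)$ for every prime $p$.

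First I would record a preliminary nonvanishing fact: evaluating $f=g_ih_i$ at a prime yields $f(p)=g_i(p)h_i(p)$, so the hypothesis $f(p)\ne 0$ automatically forces both $g_i(p)\ne 0$ and $h_i(p)\ne 0$ for $i=1,2$. (In particular, the companion assumption $h_f(p)\ne 0$ is already a consequence of $f(p)\ne 0$ for any representation.) This legitimizes the divisions used in the next step. Then I would evaluate $f$ at $p^2$ in two ways: using complete additivity of $g_i$ and complete multiplicativity of $h_i$,
\[
f(p^2)\;=\;g_i(p^2)\,h_i(p^2)\;=\;2\,g_i(p)\,h_i(p)^2\;=\;2\,f(p)\,h_i(p).
\]
Since $f(p)\ne 0$, the ratio $f(p^2)/f(p)=2\,h_i(p)$ is an invariant of $f$, forcing $h_1(p)=h_2(p)$. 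Substituting this back into $f(p)=g_i(p)h_i(p)$ and cancelling the common nonzero factor then yields $g_1(p)=g_2(p)$, which completes the reduction.

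The only real obstacle is bookkeeping the nonvanishing conditions: without $f(p)\ne 0$ the division step collapses, since then either $g_i(p)$ or $h_i(p)$ could vanish and $h_i(p)$ would no longer be recoverable from $f$ alone. An alternative route would be induction on prime powers $p^k$, avoiding division entirely, but the computation at $p^2$ is the cleanest self-contained argument.
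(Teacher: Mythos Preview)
Your proof is correct and follows essentially the same route as the paper: both arguments pin down $h_i(p)$ by computing $f(p^2)=2f(p)h_i(p)$ and dividing by the nonzero $f(p)$, then recover $g_i(p)$ from $f(p)=g_i(p)h_i(p)$. The only cosmetic difference is that the paper obtains $f(p^2)=2f(p)h_i(p)$ directly from the Leibniz rule (each $h_i$ being a valid companion by Theorem~\ref{prod}), whereas you unpack it via $g_i(p^2)h_i(p^2)=2g_i(p)h_i(p)^2$; your added remark that $f(p)\ne 0$ already forces $h_i(p)\ne 0$ is a nice sharpening not made explicit in the paper.
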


\begin{proof} 
Let $f=(g_f, h_f)=(\tilde{g}_f, \tilde{h}_f)$. Then for all primes $p$,  
$$
f(p)=g_{f}(p)h_{f}(p)=\tilde{g}_f(p)\tilde{h}_f(p). 
$$
On the other hand, by the definition of L-additivity,
$$
f(p^2)=2f(p)h_{f}(p)=2f(p)\tilde{h}_f(p), 
$$
which implies that $h_{f}(p)=\tilde{h}_f(p)$ and, consequently, $g_f(p)=\tilde{g}_f(p)$. 
\end{proof} 

In particular, $D = (g_D, h_D)$, where $g_D(p) =1/p = {\rm ld}(p)$ and $h_D(p) = p$ for all primes $p$. In other words, the arithmetic derivative has the representation as the pair of the logarithmic derivative and the identity function.
Similarly, for the arithmetic partial derivative respect to the prime $p$, we have $D_{p} = (g_{D_{p}}, h_{D_{p}})$, where $g_{D_{p}}(q)=0$ if $q\ne p$, $g_{D_{p}}(p)=1/p$, and $h_{D_{p}}(q) = q$ for all primes $q$.

\medskip
We saw in Theorem \ref{cmca} that if $f$ is completely additive or completely multiplicative, then it is totally defined by its values at primes. If $f$ is only L-additive, then we must also know the values of $h_f$ at primes.

\begin{theorem}\label{totally}
Let $n$ be as in \eqref{fta}. If $f$ is L-additive, then 
$$
f(n)
= \sum_{i=1}^r 
h_f(q_1)\cdots h_f(q_{i-1})f(q_i)h_f(q_{i+1})\cdots h_f(q_r). 
$$
If $h_f(p_1),\dots,h_f(p_s)\ne 0$, then
$$
f(n)=h_f(n)\sum_{i=1}^r\frac{f(q_i)}{h_f(q_i)}=h_f(n)\sum_{i=1}^s\frac{n_i f(p_i)}{h_f(p_i)}. 
$$

\end{theorem}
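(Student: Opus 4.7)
The plan is to prove the first identity by induction on $r$, the number of prime factors of $n$ counted with multiplicity, and then derive the second identity from Theorem~\ref{quotient} (or equivalently by dividing the first by $h_f(n)$ and grouping equal primes).

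For the first formula, the base case $r=1$ just says $f(q_1)=f(q_1)$. For the inductive step, I would write $n = m\,q_r$ with $m = q_1\cdots q_{r-1}$, apply the defining identity \eqref{gca} to get
$$f(n) = f(m)\,h_f(q_r) + f(q_r)\,h_f(m),$$
then substitute the inductive hypothesis for $f(m)$ and use complete multiplicativity of $h_f$ to write $h_f(m) = h_f(q_1)\cdots h_f(q_{r-1})$. In the first term, multiplying each of the $r-1$ summands by $h_f(q_r)$ extends every product to one of length $r$ (still with the ``hole'' at position $i$ for $i<r$), while the second term contributes exactly the missing summand corresponding to $i=r$. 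This gives the stated sum over $i=1,\dots,r$.

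For the second formula, assume $h_f(p_j)\ne 0$ for every $j$. By complete multiplicativity $h_f(n) = h_f(q_1)\cdots h_f(q_r)\ne 0$. The cleanest route is to invoke Theorem~\ref{quotient}: the function $f/h_f$ is completely additive, so Theorem~\ref{cmca} applied to $f/h_f$ yields
$$\frac{f(n)}{h_f(n)} = \sum_{i=1}^r \frac{f(q_i)}{h_f(q_i)} = \sum_{i=1}^s n_i\,\frac{f(p_i)}{h_f(p_i)},$$
and multiplying through by $h_f(n)$ gives both equalities at once. Alternatively, one can divide the first formula by $h_f(n)$ directly and collapse the $n_j$ summands in which $q_i = p_j$ into a single term $n_j f(p_j)/h_f(p_j)$.

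I do not expect any serious obstacle; the only mildly delicate point is bookkeeping in the inductive step, namely checking that multiplying the $r-1$ summands of the inductive hypothesis by $h_f(q_r)$ really produces precisely the $i=1,\dots,r-1$ terms of the target sum and that the remaining $f(q_r)h_f(m)$ is exactly the $i=r$ term. This is a routine verification once complete multiplicativity of $h_f$ has been used to factor $h_f(m)$.
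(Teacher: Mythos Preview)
Your proposal is correct and essentially matches the paper's proof: the paper establishes the first identity by the same iterative application of~\eqref{gca} (peeling off $q_1$ first rather than $q_r$, which is immaterial), and derives the second identity directly from $h_f(n)=h_f(q_1)\cdots h_f(q_r)$, i.e., your ``alternative'' route. One small caution on your primary route for the second part: Theorem~\ref{quotient} as stated assumes $h_f$ is nonzero at \emph{all} positive integers, whereas the hypothesis here only gives $h_f(p_1),\dots,h_f(p_s)\ne 0$; your alternative of dividing the first formula by $h_f(n)$ sidesteps this and is exactly what the paper does.
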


\begin{proof}
In order to verify the first claim, it suffices to notice that  
\begin{eqnarray*}
f(n) &=& f(q_1)h_f(q_2\cdots q_r)+f(q_2\cdots q_r)h_f(q_1)\\
&=& f(q_1)h_f(q_2)\cdots h_f(q_r)+f(q_2\cdots q_r)h_f(q_1)\\
&=& f(q_1)h_f(q_2)\cdots h_f(q_r)
+h_f(q_1) \big[ f(q_2)h_f(q_3\cdots q_r)+f(q_3\cdots q_r)h_f(q_2) \big]\\
&=& f(q_1)h_f(q_2)\cdots h_f(q_r)
+h_f(q_1) \big[ f(q_2)h_f(q_3)\cdots h_f(q_r)+f(q_3\cdots q_r)h_f(q_2) \big]\\
&=& \cdots\\
&=&\sum_{i=1}^r h_f(q_1)\cdots h_f(q_{i-1})f(q_i)h_f(q_{i+1})\cdots h_f(q_r). 
\end{eqnarray*}

The rest of the theorem follows from the equation $h_f(n)=h_f(q_1)\cdots h_f(q_r)$, which holds since $h_f$ is completely multiplicative. 
\end{proof}
\medskip

Since $D(p) = 1$ and $h_D(p) = p$ for all primes $p$, this theorem gives the well-known formula
$$
D(n)=n\sum_{i=1}^r\frac{1}{q_i}
=\sum_{i=1}^r q_1\cdots q_{i-1}q_{i+1}\cdots q_r
=n\sum_{i=1}^s\frac{n_i}{p_i}. 
$$
It also implies that 
$$
f(p^k)=kh_f(p)^{k-1} f(p)
$$
for all primes $p$ and nonnegative integers $k$. (For $k=0$, $h_f(p)$ must be nonzero.) In particular, for the arithmetic derivative, this reads
$$
D(p^k)=kp^{k-1}. 
$$

\begin{theorem}
If $u$ is L-additive and $v$ is completely multiplicative, then their product function $uv$ is L-additive with $h_{uv} = h_uv$. In particular, the function $f(n) = D(n)n^k$, where k is a nonnegative integer, is L-additive with $h_f(n)=n^{k+1}$. 
\end{theorem}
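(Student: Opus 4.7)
The plan is a direct verification from the definitions. I would split the proof into two tasks: (a) check that the candidate multiplier $h_{u}v$ is completely multiplicative, so that it is a legitimate witness for L-additivity, and (b) check that $uv$ satisfies the generalized Leibniz rule \eqref{gca} with this multiplier.

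For (a), I would observe that the pointwise product of two completely multiplicative functions is completely multiplicative. Since $h_u$ is completely multiplicative by the L-additivity of $u$, and $v$ is completely multiplicative by hypothesis, $(h_u v)(1)=h_u(1)v(1)=1$ and $(h_u v)(mn)=h_u(mn)v(mn)=h_u(m)h_u(n)v(m)v(n)=(h_u v)(m)(h_u v)(n)$ for all positive integers $m,n$.

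For (b), I would just expand $(uv)(mn)$ using the L-additivity of $u$ and the complete multiplicativity of $v$:
\begin{align*}
(uv)(mn) &= u(mn)\,v(mn) \\
 &= \bigl[u(m)h_u(n)+u(n)h_u(m)\bigr]v(m)v(n) \\
 &= u(m)v(m)\,h_u(n)v(n) + u(n)v(n)\,h_u(m)v(m) \\
 &= (uv)(m)\,(h_u v)(n) + (uv)(n)\,(h_u v)(m).
\end{align*}
Combined with (a), this shows that $uv$ is L-additive with $h_{uv}=h_u v$.

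For the special case $f(n)=D(n)n^k$, I would take $u=D$ (L-additive with $h_D(n)=n$) and $v(n)=n^k$, which is completely multiplicative for any nonnegative integer $k$. The general statement then immediately gives $h_f(n)=h_D(n)\cdot n^k = n\cdot n^k = n^{k+1}$. There is no real obstacle here; the only thing to be careful about is invoking the correct fact that the product of two completely multiplicative functions is completely multiplicative, which is a one-line check rather than an appeal to a prior numbered result.
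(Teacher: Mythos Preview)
Your proof is correct and follows essentially the same direct-verification approach as the paper: expand $(uv)(mn)$ using the L-additivity of $u$ and the complete multiplicativity of $v$, then regroup. You are in fact slightly more careful than the paper, which asserts without justification that $h_u v$ is the completely multiplicative part and does not spell out the special case, whereas you explicitly verify both.
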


\begin{proof} 
For all positive integers $m$ and $n$, 
\begin{eqnarray*}
(uv)(mn)&=&u(mn)v(mn)=\bigl(u(m)h_u(n)+u(n)h_u(m)\bigr)v(m)v(n)\\
&=&(uv)(m)(h_u v)(n)+(uv)(n)(h_u v)(m). 
\end{eqnarray*}
Thus $uv$ is L-additive with completely multiplicative part equaling to $h_u v$. 
\end{proof} 

\begin{theorem}\label{circ}
If $v$ is L-additive and $u$ is completely multiplicative with positive integer values, then their composite function $v\circ u$ is L-additive with $h_{v\circ u} = h_v\circ u$ .
\end{theorem}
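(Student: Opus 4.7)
The plan is to unfold the definitions and chase through two short computations: one to verify that $h_v \circ u$ is completely multiplicative (so it is a legitimate candidate for the completely multiplicative part), and one to check the generalized Leibniz identity~\eqref{gca} for $v \circ u$ with $h_{v\circ u} = h_v \circ u$.

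First I would observe that the hypothesis that $u$ takes positive integer values is exactly what makes $v\circ u$ and $h_v\circ u$ well-defined arithmetic functions, since $v$ and $h_v$ are only defined on positive integers. Having secured that, the complete multiplicativity of $h_v \circ u$ follows immediately by composing the two relevant identities: $(h_v\circ u)(1) = h_v(u(1)) = h_v(1) = 1$, and
\[
(h_v\circ u)(mn) = h_v(u(mn)) = h_v(u(m)u(n)) = h_v(u(m))\, h_v(u(n)) = (h_v\circ u)(m)\,(h_v\circ u)(n),
\]
using complete multiplicativity of $u$ in the second equality and that of $h_v$ in the third.

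Next, I would verify the Leibniz-additive identity. The key move is to rewrite $(v\circ u)(mn)$ as $v(u(m)u(n))$ using the complete multiplicativity of $u$, and then apply the L-additivity of $v$ to the pair of positive integers $u(m), u(n)$. This yields
\[
v(u(m)u(n)) = v(u(m))\,h_v(u(n)) + v(u(n))\,h_v(u(m)),
\]
which is exactly $(v\circ u)(m)(h_v\circ u)(n) + (v\circ u)(n)(h_v\circ u)(m)$, as required.

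There is no serious obstacle here; the proof is essentially a two-line verification plus the observation on well-definedness. The only subtlety worth flagging in the write-up is that the positivity and integrality of $u$ are used implicitly at the point where we feed $u(m), u(n)$ into $v$ and $h_v$, and that complete multiplicativity of $u$ is used precisely once, to turn $u(mn)$ into $u(m)u(n)$ inside the argument of $v$.
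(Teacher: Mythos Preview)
Your proof is correct and follows essentially the same approach as the paper: both arguments verify the Leibniz identity for $v\circ u$ by expanding $v(u(m)u(n))$ via the L-additivity of $v$, and check separately that $h_v\circ u$ is completely multiplicative. The only differences are cosmetic---you reverse the order of the two verifications and add explicit remarks on well-definedness and the value at~$1$.
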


\begin{proof} 
For all positive integers $m$ and $n$, 
\begin{eqnarray*}
(v\circ u)(mn)&=&v(u(mn))=v(u(m)u(n))=v(u(m))h_v(u(n))+v(u(n))h_v(u(m))\\
&=&(v\circ u)(m)(h_v\circ u)(n)+(v\circ u)(n)(h_v\circ u)(m). 
\end{eqnarray*}
It remains to show that $h_v\circ u$ is completely multiplicative. We have 
$$
(h_v\circ u)(mn)=h_v(u(mn))=h_v(u(m)u(n))=h_v(u(m))h_v(u(n))=(h_v\circ u)(m)(h_v\circ u)(n). 
$$
This completes the proof. 
\end{proof} 

Let $u$ be as in Theorem \ref{circ}. Since $D$ is L-additive, then, by this theorem, $D\circ u$ is L-additive with $h_{D\circ u}=u$; that is
$$
 D(u(mn)) = u(n)D(u(m))+u(m)D(u(n))
$$
 for all positive integers $m, n$. In particular,
$$
 D((mn)^k) = n^kD(m^k)+m^kD(n^k),
$$
 where $k$ is a nonnegative integer. These equations follow also from the fact that $D$ satisfies the Leibniz rule.


\section{L-additive functions in terms of the Dirichlet convolution}

Above we have seen that many fundamental properties of the arithmetic derivative, e.g., the formula for computing the arithmetic derivative of a given positive integer,
are rooted to the fact that $D$ is L-additive. We complete this article by changing our point of view slightly and demonstrate that L-additive functions can also be studied in terms of the Dirichlet convolutions. 

\medskip

Let $u$ and $v$ be arithmetic functions. Their {\it Dirichlet convolution} is
$$
(u\ast v)(n)=\sum_{\substack{a,b=1\\ab=n}}^n u(a)v(b).
$$
We let $f(u\ast v)$ denote the product function of $f$ and $u\ast v$, i.e.,
$$
(f(u\ast v))(n) = f(n)(u\ast v)(n).
$$

\begin{theorem}
An arithmetic function $f$ is completely additive if and only if
$$
f(u\ast v) = (fu)\ast v+u\ast (fv)
$$
for all arithmetic functions $u$ and $v$.
\end{theorem}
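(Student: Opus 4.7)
The plan is to prove both directions by direct calculation, with the converse relying on a test-function trick that isolates a single product.

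For the forward (\emph{only if}) direction, I would assume $f$ is completely additive and simply expand both sides pointwise at a positive integer $n$. The left side is $f(n)\sum_{ab=n}u(a)v(b)$; since every pair $(a,b)$ in this sum satisfies $ab=n$, complete additivity gives $f(n)=f(a)+f(b)$, so I can move $f$ inside the sum and split it as $\sum_{ab=n}f(a)u(a)v(b)+\sum_{ab=n}u(a)f(b)v(b)$, which is exactly $((fu)\ast v)(n)+(u\ast(fv))(n)$.

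For the converse (\emph{if}) direction, I would specialize the hypothesized identity to test functions that probe a single product. Fix positive integers $m, n$ and let $u$ be the indicator function of $\{m\}$ (i.e.\ $u(k)=1$ if $k=m$ and $0$ otherwise) and $v$ the indicator of $\{n\}$; a direct check shows $(u\ast v)(k)=1$ if $k=mn$ and $0$ otherwise. Evaluating the identity at $k=mn$ makes the left side equal $f(mn)$, while on the right the only nonzero term of $((fu)\ast v)(mn)$ comes from $(a,b)=(m,n)$ and contributes $f(m)$, and similarly $(u\ast(fv))(mn)$ collapses to $f(n)$. This yields $f(mn)=f(m)+f(n)$ for all positive integers $m,n$, which is complete additivity (and forces $f(1)=0$ on taking $m=n=1$).

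I do not foresee a substantive obstacle: both directions reduce to elementary manipulation of the defining sum for the Dirichlet convolution. The only step requiring a small idea is the choice of indicator test functions in the converse, and even that is a standard device, since such point masses form a natural basis for arithmetic functions under $\ast$.
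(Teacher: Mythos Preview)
Your argument is correct in both directions. The forward direction is the obvious unwinding of the convolution sum using $f(ab)=f(a)+f(b)$, and your converse via the point masses $u=\mathbf{1}_{\{m\}}$, $v=\mathbf{1}_{\{n\}}$ is clean: since $(u\ast v)(k)$ equals $1$ exactly when $k=mn$, evaluating the identity at $mn$ collapses each side to the desired $f(mn)=f(m)+f(n)$.

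By way of comparison, the paper does not actually prove this theorem; it simply cites Schwab's result (\emph{Nieuw Arch.\ Wisk.}~\textbf{13}(2), 1995). So your write-up supplies a self-contained proof where the paper defers to the literature. The indicator-function trick you use for the converse is the standard device (and is essentially what underlies Schwab's argument as well), so there is no substantive divergence in method---you are just making explicit what the paper leaves as a reference.
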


\begin{proof} 
See \cite[Proposition 2]{Sc}. 
\end{proof} 

Next theorems shows that L-additive functions can  also be characterized in an analogous way. 

\begin{theorem} 
Let $f$ be an arithmetic function. If $f$ is L-additive and $h_f$ is
nonzero-valued, then 
\begin{eqnarray}
\label{gen_Schw}
f(u\ast v) = (fu)\ast (h_fv)+(h_fu)\ast (fv)
\end{eqnarray}
for all arithmetic functions $u$ and $v$. Conversely, if there is a completely
multiplicative nonzero-valued function~$h$ such that
\begin{eqnarray}
\label{convassump}
f(u\ast v) = (fu)\ast (hv)+(hu)\ast (fv)
\end{eqnarray}
for all arithmetic functions $u$ and $v$, then $f$ is L-additive and $h_f=h$.
\end{theorem}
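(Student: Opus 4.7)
My plan is to prove the two directions by pointwise evaluation of the claimed identity at an arbitrary positive integer $n$, noting that no appeal to quotient constructions is needed — the identity essentially encodes L-additivity directly.

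For the forward direction, I would fix $n$ and expand both sides using the definition of Dirichlet convolution. The left-hand side becomes $f(n)\sum_{ab=n}u(a)v(b)$, while the right-hand side becomes
\[
\sum_{ab=n}\bigl[f(a)u(a)h_f(b)v(b)+h_f(a)u(a)f(b)v(b)\bigr]
=\sum_{ab=n}u(a)v(b)\bigl[f(a)h_f(b)+h_f(a)f(b)\bigr].
\]
Now L-additivity applied to each pair $(a,b)$ gives $f(a)h_f(b)+h_f(a)f(b)=f(ab)=f(n)$, so the right-hand side collapses to $f(n)\sum_{ab=n}u(a)v(b)$, matching the left-hand side. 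Notice that this argument does not actually use the hypothesis that $h_f$ is nonzero-valued; it just uses the defining identity \eqref{gca} termwise.

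For the converse, I would specialize $u$ and $v$ to be the ``delta'' functions $e_m$ and $e_n$ defined by $e_k(j)=1$ if $j=k$ and $e_k(j)=0$ otherwise, where $m$ and $n$ are arbitrary positive integers. A direct computation from the convolution formula gives $e_m\ast e_n=e_{mn}$, and one checks that $fe_m=f(m)e_m$, $he_n=h(n)e_n$, and so on. Evaluating \eqref{convassump} at the integer $mn$ therefore reduces to
\[
f(mn)=f(m)h(n)+h(m)f(n),
\]
which, together with the assumption that $h$ is completely multiplicative, is precisely the statement that $f$ is L-additive with $h_f=h$.

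The argument is routine in both directions; the only modest subtlety is the choice of test functions in the converse, where taking $u$ and $v$ to be indicator-like delta functions localizes the functional identity at an arbitrary product $mn$ and extracts the generalized Leibniz rule. The nonzero-valued hypothesis on $h_f$ (and on $h$) is not actually invoked in either direction of my proof; it appears to be included primarily to parallel Theorem~\ref{quotient} and to ensure that the completely multiplicative partner is meaningfully determined.
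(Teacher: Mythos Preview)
Your proof is correct in both directions, but it is genuinely different from the argument the paper gives. The paper proves the forward direction by passing to the quotient $f/h_f$, invoking Theorem~\ref{quotient} to see that this quotient is completely additive, applying Schwab's characterization (Theorem~3.1) to obtain $(f/h_f)(u\ast v)=(fu/h_f)\ast v+u\ast(fv/h_f)$, and then multiplying back by $h_f$ using the distributivity property~\eqref{distr} of completely multiplicative functions; the converse is the same manipulation in reverse. Your argument bypasses all of this: the forward direction is just a termwise application of~\eqref{gca} to each factorization $ab=n$, and for the converse you specialize $u=e_m$, $v=e_n$ to extract the Leibniz rule at an arbitrary product $mn$. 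The paper's route has the virtue of explaining \emph{why} the nonzero hypothesis appears---it is needed to form $f/h_f$ and to invoke Theorems~\ref{quotient} and~3.1---whereas your approach is more elementary and, as you correctly observe, shows that the nonzero-valued hypothesis on $h_f$ (resp.\ $h$) is in fact unnecessary for the identity itself.
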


\begin{proof} 
Under the assumptions of the first part, Theorems~2.1 and~3.1 imply
$$
(f/h_f)(u\ast v)=(fu/h_f)\ast v+u\ast (fv/h_f).
$$
Multiplying by~$h_f$, the left-hand side becomes $f(u\ast v)$. Since
$$
h_f((fu/h_f)\ast v)=(fu)\ast (h_fv),\quad h_f(u\ast (fv/h_f))=(h_fu)\ast (fv)
$$
by \eqref{distr}, the claim follows. To prove the second part, multiply~(\ref{convassump})
by~$1/h$ and perform a simple modification of the above.
\end{proof} 

\begin{corollary} 
If $u$ and $v$ are arithmetic functions, then
$$
D(u\ast v)=(Du)\ast (Nv)+(Nu)\ast (Dv),
$$
where $N(n)=n$.
\end{corollary}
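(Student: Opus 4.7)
The plan is to apply the preceding theorem directly, with $f=D$ and $h_f=N$. First I would verify that the hypotheses are met: recall from the introduction that the arithmetic derivative $D$ is L-additive with $h_D(n)=n=N(n)$, and $N$ is completely multiplicative (since $N(mn)=mn=N(m)N(n)$ and $N(1)=1$) as well as nonzero-valued (since $N(n)=n\ge 1$ for every positive integer $n$).

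Next I would invoke the first part of the previous theorem with these choices. Substituting $f=D$ and $h_f=N$ into \eqref{gen_Schw} yields
$$
D(u\ast v)=(Du)\ast(Nv)+(Nu)\ast(Dv)
$$
for all arithmetic functions $u$ and $v$, which is precisely the claim.

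There is no real obstacle here; the only thing worth double-checking is that $N$ is nonzero-valued so that the first part of the theorem actually applies, and that $h_D=N$ holds in the sense of the definition of L-additivity, both of which were noted already in Section 1.
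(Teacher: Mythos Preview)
Your proposal is correct and follows exactly the paper's approach: the paper's proof is simply ``It suffices to notice that $h_D=N$,'' which is precisely your application of the preceding theorem with $f=D$ and $h_f=N$. Your additional verification that $N$ is completely multiplicative and nonzero-valued just makes explicit what the paper leaves implicit.
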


\begin{proof} 
It suffices to notice that $h_D=N$. 
\end{proof} 

On the other hand, taking $u=v$,  equation \eqref{gen_Schw} becomes 
$$
f(u^{\ast 2})=f(u\ast u)=2[(fu)\ast (h_f u)].    
$$
For $u=E$, where $E(n)=1$ for all positive integers $n$, this reads  
$$
f\tau=2(f\ast h_f),   
$$ 
where $\tau$ is the divisor-number-function. 
Especially, we have 
$$
D(u^{\ast 2})=2[(Du)\ast (Nu)]   
$$
and, with $u=E$,  
$$
D\tau=2(D\ast N).    
$$ 

\bigskip

Some remarks can be made also in the opposite direction.  Assume now that 
$$
f\tau=2(f\ast h)   
$$ 
for some completely multiplicative function $h$ that is nonzero for all positive integers $n$. 
Then 
$$
(f/h)\tau=2((f/h)\ast E).   
$$ 
Thus, again according to \cite[Proposition 2]{Sc}, we see that  $f/h$ is completely additive, which shows that 
$f$ is L-additive with $h_f=h$. 
In particular, if 
$$
f\tau=2(f\ast N),  
$$ 
then $f$ is L-additive with $h_f=N$. For example, $D$ satisfies this condition.

\medskip

Further properties of L-additive functions in terms of the Dirichlet convolution can be derived from the results in \cite{Sc, LT}. It would be possible to obtain some properties of L-additive functions in terms of the unitary convolution as well from the results in \cite{LT}.

\end{document}